\newtheorem{thm}{Theorem}[section]
\numberwithin{equation}{section}
\renewcommand{\thefootnote}
\begin{document}

\begin{center}
{\large\bf On some conjectural series containing binomial
coefficients\\ and harmonic numbers
 \footnote{ The work is supported by the National Natural Science Foundation of China (No. 12071103).}}
\end{center}

\renewcommand{\thefootnote}{$\dagger$}

\vskip 2mm \centerline{Chuanan Wei}
\begin{center}
{School of Biomedical Information and Engineering\\ Hainan Medical
University, Haikou 571199, China
\\
 Email address: weichuanan78@163.com}
\end{center}


\vskip 0.7cm \noindent{\bf Abstract.} Binomial coefficients and
harmonic numbers are important in many branches of number theory.
With the help of the operator method and several summation and
transformation formulas for hypergeometric series, we prove eight
conjectural series of Z.-W. Sun containing binomial coefficients and
harmonic numbers in this paper.

\vskip 3mm \noindent {\it Keywords}: binomial coefficients;
 harmonic numbers; hypergeometric series

 \vskip 0.2cm \noindent{\it AMS
Subject Classifications:} 33D15; 05A15

\section{Introduction}

For $\ell,n\in \mathbb{Z}^{+}$, define the generalized $\ell$-order
harmonic numbers as
\[H_{n}^{(\ell)}(x)=\sum_{k=1}^n\frac{1}{(x+k)^{\ell}},\]
where  $x$ is a complex variable. The $x=0$ case of them are the
$\ell$-order harmonic numbers
\[H_{n}^{(\ell)}=\sum_{k=1}^n\frac{1}{k^{\ell}}.\]
When $\ell=0$, they reduce to classical harmonic numbers:
\[
H_{n}=\sum_{k=1}^n\frac{1}{k}.\] For a nonnegative integer $m$,
define the shifted-factorial to be
\begin{align*}
(x)_0=1 \quad\text{and}\quad (x)_m=x(x+1)\cdots(x+m-1) \quad
\text{when} \quad m\in \mathbb{Z}^{+}.
\end{align*}
 For a differentiable function $f(x)$, define the derivative operator
$\mathcal{D}_x$ by
\begin{align*}
\mathcal{D}_xf(x)=\frac{d}{dx}f(x).
 \end{align*}
 Then it is ordinary to find that
\begin{align*}
\mathcal{D}_x\:(1+x)_r=(1+x)_rH_r(x).
 \end{align*}
Several nice harmonic number identities from differentiation of the
shifted-factorials can be viewed in the papers
\cite{Liu,Paule,Sofo,Wang-Wei}.

Define the digamma function $\psi(x)$ as
\begin{align*}
\psi(x)=\frac{d}{dx}\big\{\log\Gamma(x)\big\},
\end{align*}
where $\Gamma(x)$ is the familiar gamma function. Furthermore, we
can define the polygamma function $\psi^{(n)}(x)$ to be
\begin{align*}
\psi^{(n)}(x)=\frac{d^{n+1}}{dx^{n+1}}\big\{\log\Gamma(x)\big\}=\frac{d^{n}}{dx^{n}}\psi(x).
 \end{align*}
It is famous that the polygamma function satisfies the recurrence
relation:
\begin{align}
\psi^{(n)}(x+1)=\psi^{(n)}(x)+\frac{(-1)^nn!}{x^{n+1}}.
\label{recurrence}
 \end{align}
Two related special values of $\psi\,'(x)$ (cf. \cite{Liu}) read
\begin{align}
&\psi\,'\bigg(\frac{1}{4}\bigg)=\pi^2+8G,
 \label{digamma-b}\\[2mm]
&\psi\,'\bigg(\frac{3}{4}\bigg)=\pi^2-8G, \label{digamma-c}
\end{align}
where $G$ is the Catalan constant:
$$G=\sum_{k=0}^{\infty}\frac{(-1)^k}{(1+2k)^2}.$$

Sun \cite[Corollary 1.4]{Sun-a} provided the following
supercongruence:
\begin{align*}
&\sum_{k=0}^{p-1}\frac{\binom{2k}{k}\binom{3k}{k}}{(-216)^k}\equiv\bigg(\frac{p}{3}\bigg)\sum_{k=0}^{p-1}\frac{\binom{2k}{k}\binom{3k}{k}}{24^k}\pmod{p^2},
\\[1mm]
&\sum_{k=0}^{p-1}\frac{\binom{2k}{k}\binom{4k}{2k}}{(-192)^k}\equiv\bigg(\frac{-2}{p}\bigg)\sum_{k=0}^{p-1}\frac{\binom{2k}{k}\binom{4k}{2k}}{48^k}\pmod{p^2},
\\[1mm]
&\sum_{k=0}^{p-1}\frac{\binom{2k}{k}\binom{4k}{2k}}{(-4032)^k}\equiv\bigg(\frac{-2}{p}\bigg)\sum_{k=0}^{p-1}\frac{\binom{2k}{k}\binom{4k}{2k}}{63^k}\pmod{p^2},
\\[1mm]
&\sum_{k=0}^{p-1}\frac{\binom{2k}{k}\binom{4k}{2k}}{576^k}\equiv\bigg(\frac{-2}{p}\bigg)\sum_{k=0}^{p-1}\frac{\binom{2k}{k}\binom{4k}{2k}}{72^k}\pmod{p^2},
\end{align*}
where $p>3$ is any prime and $(\frac{.}{p})$ denotes the Legendre
symbol. Some related supercongruences can be viewed in the papers
\cite{Mao,Sun-11,Wang-Sun}.

Motivated by the works just mentioned, Sun \cite[Equations (2.21),
(2.25), (2.26), (2.23), (2.24)]{Sun} proposed the following five
conjectures containing binomial coefficients and harmonic numbers.

\begin{thm}\label{thm-a}
\begin{align}
&\sum_{k=0}^{\infty}\frac{\binom{2k}{k}\binom{3k}{k}}{(-216)^k}(3H_{3k}-H_k)=\bigg(\log\frac{8}{9}\bigg)\sum_{k=0}^{\infty}\frac{\binom{2k}{k}\binom{3k}{k}}{(-216)^k},
\label{equation-wei-a}\\[1mm]
&\sum_{k=0}^{\infty}\frac{\binom{2k}{k}\binom{4k}{2k}}{(-192)^k}(2H_{4k}-H_{2k})=\frac{1}{2}\bigg(\log\frac{3}{4}\bigg)\sum_{k=0}^{\infty}\frac{\binom{2k}{k}\binom{4k}{2k}}{(-192)^k},
\label{equation-wei-b}\\[1mm]
&\sum_{k=0}^{\infty}\frac{\binom{2k}{k}\binom{4k}{2k}}{(-4032)^k}(2H_{4k}-H_{2k})=\frac{1}{2}\bigg(\log\frac{63}{64}\bigg)\sum_{k=0}^{\infty}\frac{\binom{2k}{k}\binom{4k}{2k}}{(-4032)^k},
\label{equation-wei-c}\\[1mm]
&\sum_{k=0}^{\infty}\frac{\binom{2k}{k}\binom{4k}{2k}}{72^k}(2H_{4k}-H_{2k})=(\log3)\sum_{k=0}^{\infty}\frac{\binom{2k}{k}\binom{4k}{2k}}{72^k},
\label{equation-wei-d}\\[1mm]
&\sum_{k=0}^{\infty}\frac{\binom{2k}{k}\binom{4k}{2k}}{576^k}(2H_{4k}-H_{2k})=\frac{1}{2}\bigg(\log\frac{9}{8}\bigg)\sum_{k=0}^{\infty}\frac{\binom{2k}{k}\binom{4k}{2k}}{576^k}.
\label{equation-wei-e}
\end{align}
\end{thm}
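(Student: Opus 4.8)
The plan is to recast each series on the right as a Gauss function ${}_2F_1$ at a fixed argument, deform its top parameters by a single variable $x$, and recover the harmonic-number combinations as a logarithmic derivative at $x=0$. First I would simplify the binomial products using
\[
\binom{2k}{k}\binom{3k}{k}=\frac{(3k)!}{(k!)^3}=\frac{27^k(1/3)_k(2/3)_k}{(k!)^2},\qquad
\binom{2k}{k}\binom{4k}{2k}=\frac{64^k(1/4)_k(3/4)_k}{(k!)^2}.
\]
Each base sum then becomes ${}_2F_1(a,1-a;1;z)=\sum_{k\ge0}\frac{(a)_k(1-a)_k}{(k!)^2}z^k$, with $(a,z)=(1/3,-1/8)$ for \eqref{equation-wei-a}, and $(a,z)=(1/4,64/c)$ for the four quartic series, i.e.\ $z=-1/3,-1/63,8/9,1/9$ for \eqref{equation-wei-b}--\eqref{equation-wei-e}. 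All five arguments satisfy $|z|<1$, so every series converges absolutely.

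Next I would introduce the symmetric deformation
\[
S(x)={}_2F_1\big(x+a,\,x+1-a;\,1;\,z\big)=\sum_{k=0}^{\infty}\frac{(x+a)_k(x+1-a)_k}{(k!)^2}z^k,
\]
which reduces to the base sum at $x=0$. Since only the two numerator Pochhammer symbols depend on $x$, the operator identity $\mathcal{D}_x(1+x)_r=(1+x)_rH_r(x)$ gives, after splitting the resulting harmonic sum by residue classes,
\[
\mathcal{D}_x\log\big[(x+1/3)_k(x+2/3)_k\big]\big|_{x=0}=3H_{3k}-H_k,\qquad
\mathcal{D}_x\log\big[(x+1/4)_k(x+3/4)_k\big]\big|_{x=0}=2\big(2H_{4k}-H_{2k}\big).
\]
Hence $S'(0)=\kappa\cdot(\text{left-hand side})$ with $\kappa=1$ in the cubic case \eqref{equation-wei-a} and $\kappa=2$ in the four quartic cases; term-by-term differentiation is legitimate because the coefficients are analytic in $x$ and the series converges uniformly on compact sets for $|z|<1$.

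The crux is a functional equation obtained from Euler's transformation ${}_2F_1(\alpha,\beta;\gamma;z)=(1-z)^{\gamma-\alpha-\beta}{}_2F_1(\gamma-\alpha,\gamma-\beta;\gamma;z)$. Taking $\gamma=1$, $\alpha=x+a$, $\beta=x+1-a$ gives $\gamma-\alpha-\beta=-2x$ and $\{\gamma-\alpha,\gamma-\beta\}=\{a-x,\,1-a-x\}$, so the transformed function is precisely $S(-x)$ and
\[
S(x)=(1-z)^{-2x}\,S(-x).
\]
Taking logarithms and differentiating at $x=0$ yields $2S'(0)/S(0)=-2\log(1-z)$, i.e.\ $S'(0)=S(0)\log\frac{1}{1-z}$. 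Combined with the previous step this gives
\[
\text{left-hand side}=\frac{1}{\kappa}\,\Big(\log\frac{1}{1-z}\Big)\sum_{k=0}^{\infty}\frac{(a)_k(1-a)_k}{(k!)^2}z^k,
\]
and inserting $1-z=9/8,\,4/3,\,64/63,\,1/9,\,8/9$ reproduces the constants $\log\frac89$, $\tfrac12\log\frac34$, $\tfrac12\log\frac{63}{64}$, $\tfrac12\log9=\log3$, $\tfrac12\log\frac98$ of \eqref{equation-wei-a}--\eqref{equation-wei-e}.

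The main obstacle is conceptual rather than computational: recognizing that the symmetric deformation turns Euler's transformation into the self-reflection $S(x)=(1-z)^{-2x}S(-x)$, which delivers the logarithmic derivative $S'(0)/S(0)$ without requiring any closed-form evaluation of the individual ${}_2F_1$'s. Once this symmetry is found, what remains is routine: the Pochhammer-to-harmonic bookkeeping producing $3H_{3k}-H_k$ and $2H_{4k}-H_{2k}$, and the five numerical checks of $1-z$. A point to watch is the factor $\kappa=2$ in the quartic cases, which is exactly what produces the coefficient $\tfrac12$ in \eqref{equation-wei-b}, \eqref{equation-wei-c}, \eqref{equation-wei-e} and the collapse $\tfrac12\log9=\log3$ in \eqref{equation-wei-d}.
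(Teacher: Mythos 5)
Your proof is correct and takes essentially the same route as the paper: both extract the harmonic numbers by differentiating Euler's transformation for the ${}_{2}F_{1}$ with respect to a parameter in the self-reciprocal case $c=a+b=1$, where the prefactor $(1-z)^{c-a-b}$ is what produces the logarithm. Your symmetric deformation with the reflection $S(x)=(1-z)^{-2x}S(-x)$ is just a compact repackaging of the paper's step of applying $\mathcal{D}_{a}$ to Euler's formula and then specializing $c=a+b$, and your Pochhammer--harmonic conversions and numerical evaluations of $1-z$ agree with the paper's.
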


Motivated by the series from {\bf{Mathematica}}:
\begin{align*}
\sum_{k=0}^{\infty}\frac{\binom{2k}{k}^2}{32^k}=\frac{\Gamma(1/4)^2}{2\pi\sqrt{\pi}},
\end{align*}
Sun \cite[Equation (2.19)]{Sun} conjectured the following series
containing binomial coefficients and harmonic numbers.

\begin{thm}\label{thm-b}
\begin{align}
\sum_{k=0}^{\infty}\frac{\binom{2k}{k}^2}{32^k}\bigg\{H_{2k}^{(2)}-\frac{1}{4}H_{k}^{(2)}\bigg\}=\Gamma\bigg(\frac{1}{4}\bigg)^2\frac{\pi^2-8G}{32\pi\sqrt{\pi}}.
 \label{equation-wei-f}
\end{align}
\end{thm}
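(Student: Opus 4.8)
The plan is to recast \eqref{equation-wei-f} as the second derivative at $x=0$ of a parametrized ${}_2F_1$-series that can be summed in closed form, and then to read off the trigamma value $\psi\,'(3/4)$ on the right-hand side. First I would rewrite the summand in hypergeometric shape: since $\binom{2k}{k}=4^k(1/2)_k/k!$, we have $\binom{2k}{k}^2/32^k=((1/2)_k)^2/\{(k!)^2 2^k\}$. Splitting $H_{2k}^{(2)}$ into its even- and odd-indexed parts gives
\[
H_{2k}^{(2)}-\tfrac14 H_k^{(2)}=\sum_{j=1}^{k}\frac{1}{(2j-1)^2}=\frac14\,H_k^{(2)}\!\Big(-\tfrac12\Big),
\]
so that the left-hand side of \eqref{equation-wei-f} equals $\tfrac14\sum_{k\ge0}\frac{((1/2)_k)^2}{(k!)^2 2^k}H_k^{(2)}(-\tfrac12)$. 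The whole task is thus to evaluate this single second-order harmonic sum.

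Next I would introduce the symmetric perturbation $a=\tfrac12+x$, $b=\tfrac12-x$ into the series $\sum_{k}\frac{(a)_k(b)_k}{(k!)^2 2^k}={}_2F_1\!\big(a,b;1;\tfrac12\big)$. Because $\tfrac{a+b+1}{2}=1$ is independent of $x$, Gauss's second summation theorem applies for all small $x$ and yields the clean closed form
\[
{}_2F_1\!\Big(\tfrac12+x,\tfrac12-x;1;\tfrac12\Big)=\frac{\sqrt{\pi}}{\Gamma\!\big(\tfrac34+\tfrac x2\big)\,\Gamma\!\big(\tfrac34-\tfrac x2\big)}.
\]
I would then apply the operator $\mathcal{D}_x^{2}$ at $x=0$ to both sides. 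Writing $u=(\tfrac12+x)_k$ and $v=(\tfrac12-x)_k$, one computes $u'=u\sigma_+$, $v'=-v\sigma_-$, $u''=u(\sigma_+^2-\tau_+)$, $v''=v(\sigma_-^2-\tau_-)$, where $\sigma_\pm$ and $\tau_\pm$ are the corresponding first- and second-order sums; at $x=0$ the symmetry forces $\sigma_+=\sigma_-$ and $\tau_+=\tau_-$, so
\[
\big(uv\big)''\big|_{x=0}=u''v+2u'v'+uv''\big|_{x=0}=-2\,((1/2)_k)^2\,H_k^{(2)}\!\Big(-\tfrac12\Big).
\]
The point is that the first-order (squared) contributions cancel exactly, and only the single second-order harmonic sum survives. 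Consequently $\mathcal{D}_x^{2}|_{x=0}$ of the left-hand series equals $-8$ times the target sum.

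Finally I would differentiate the Gamma-quotient on the right. Logarithmic differentiation shows the first derivative vanishes at $x=0$ by the $\pm x$ symmetry, while $\mathcal{D}_x^{2}|_{x=0}$ produces $-\tfrac12\,\psi\,'(3/4)/\Gamma(3/4)^2$ times $\sqrt{\pi}$. Equating the two sides gives the target sum as $\sqrt{\pi}\,\psi\,'(3/4)/\{16\,\Gamma(3/4)^2\}$; substituting $\psi\,'(3/4)=\pi^2-8G$ from \eqref{digamma-c} together with the reflection value $\Gamma(3/4)^2=2\pi^2/\Gamma(1/4)^2$ yields precisely $\Gamma(1/4)^2(\pi^2-8G)/(32\pi\sqrt{\pi})$, as claimed. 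The main obstacle is the third step: establishing the exact second-order cancellation in $\mathcal{D}_x^{2}[(\tfrac12+x)_k(\tfrac12-x)_k]$ and justifying termwise differentiation of the infinite series, which follows from uniform convergence on a neighborhood of $x=0$ guaranteed by the $2^{-k}$ decay of the terms.
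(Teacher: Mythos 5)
Your proposal is correct, and every computational claim in it checks out: the reduction $H_{2k}^{(2)}-\tfrac14H_k^{(2)}=\tfrac14H_k^{(2)}(-\tfrac12)$, the second-derivative cancellation $\mathcal{D}_x^2\big[(\tfrac12+x)_k(\tfrac12-x)_k\big]\big|_{x=0}=-2\,(\tfrac12)_k^2H_k^{(2)}(-\tfrac12)$, the value $f''(0)=-\tfrac{\sqrt{\pi}}{2\Gamma(3/4)^2}\psi\,'(3/4)$ for the Gamma quotient, and the final substitution via $\psi\,'(3/4)=\pi^2-8G$ and $\Gamma(3/4)^2=2\pi^2/\Gamma(1/4)^2$ all lead to the stated right-hand side. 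Your route differs from the paper's in mechanism though not in spirit. The paper applies a \emph{single} derivative $\mathcal{D}_a$ to Bailey's theorem $_2F_1(a,1-a;b;\tfrac12)=\Gamma(\tfrac b2)\Gamma(\tfrac{1+b}2)/\{\Gamma(\tfrac{a+b}2)\Gamma(\tfrac{1-a+b}2)\}$ with $b$ kept general, factors $(1-2a)$ out of $H_k(a-1)-H_k(-a)$ algebraically, divides by it, and then takes $a\to\tfrac12$ with L'H\^{o}pital on the digamma difference; the trigamma value emerges from that limit. You instead build the symmetry into the parametrization $(\tfrac12+x,\tfrac12-x)$ at fixed third parameter $1$ (your instance of Gauss's second theorem is exactly Bailey's theorem at $b=1$, so the key input is the same identity), take \emph{two} derivatives at $x=0$, and let evenness kill the first-order terms, so no L'H\^{o}pital or algebraic factoring is needed. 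The trade-off: your argument is self-contained and arguably cleaner for this one identity, while the paper's version retains the free parameter $b$, producing the intermediate formula
\begin{align*}
\sum_{k=0}^{\infty}\bigg(\frac{1}{2}\bigg)^{k}\frac{(\frac{1}{2})_k^2}{(1)_{k}(b)_{k}}\bigg\{H_{2k}^{(2)}-\frac{1}{4}H_{k}^{(2)}\bigg\}
=\frac{\Gamma(\frac{b}{2})\Gamma(\frac{1+b}{2})}{16\Gamma(\frac{1+2b}{4})^2}\psi\,'\bigg(\frac{1+2b}{4}\bigg),
\end{align*}
whose $b=2$ case yields the paper's companion Theorem 3.4 for free; your fixed symmetric perturbation forfeits that generality. (Incidentally, your citation of \eqref{digamma-c} for $\psi\,'(3/4)$ is the right one; the paper's reference to \eqref{digamma-b} at the corresponding step is a slip, since $b=1$ gives $\tfrac{1+2b}{4}=\tfrac34$.)
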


There exist a lot of interesting $\pi$-formulas in the literature.
Two series for $1/\pi^2$ due to Guillera
\cite{Guillera-a,Guillera-c} can be laid out as follows:
\begin{align}
&\quad\sum_{k=0}^{\infty}(20k^2+8k+1)\frac{\binom{2k}{k}^5}{(-2^{12})^k}=\frac{8}{\pi^2},
\label{Guillera-a}\\[1mm]
&\sum_{k=0}^{\infty}(820k^2+180k+13)\frac{\binom{2k}{k}^5}{(-2^{20})^k}=\frac{128}{\pi^2}.
\label{Guillera-b}
\end{align}
For more conclusions on $\pi$-formulas, the reader is referred to
 the papers \cite{Au,Guo,Liu-a,Sun-b,Wang}.

Encouraged by \eqref{Guillera-a} and \eqref{Guillera-b},  Sun
\cite[Equations (4.19) and (4.24)]{Sun} proposed the following two
conjectures containing binomial coefficients and harmonic numbers.

\begin{thm}\label{thm-c}
\begin{align}
&\quad\:\:\sum_{k=0}^{\infty}\frac{\binom{2k}{k}^5}{(-2^{12})^k}\Big\{(20k^2+8k+1)\big[8H_{2k}^{(2)}-3H_{k}^{(2)}\big]+4\Big\}=\frac{8}{3},
 \label{equation-wei-g}\\[1mm]
&\sum_{k=0}^{\infty}\frac{\binom{2k}{k}^5}{(-2^{20})^k}\Big\{(820k^2+180k+13)\big[11H_{2k}^{(2)}-3H_{k}^{(2)}\big]+43\Big\}=\frac{128}{3}.
 \label{equation-wei-h}
\end{align}
\end{thm}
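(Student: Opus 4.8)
The plan is to realize each of \eqref{equation-wei-g} and \eqref{equation-wei-h} as the value at $x=0$ of the second derivative $\mathcal D_x^{2}$ of a one-parameter deformation of the matching Guillera formula. For \eqref{equation-wei-g} I would work with the family
\[
\Phi(x)=\sum_{k=0}^{\infty}(-1)^{k}\,\frac{(\tfrac12)_k\,(\tfrac12+x)_k^{2}\,(\tfrac12-x)_k^{2}}{(1)_k^{3}\,(1+x)_k\,(1-x)_k}\,\frac{20k^{2}+8k+1-4x^{2}}{4^{k}},
\]
which is even in $x$ and, since $\binom{2k}{k}^{5}/(-2^{12})^{k}=(-1)^{k}(\tfrac12)_k^{5}/(4^{k}k!^{5})$, satisfies $\Phi(0)=\sum_{k}(20k^{2}+8k+1)\binom{2k}{k}^{5}/(-2^{12})^{k}=8/\pi^{2}$ by \eqref{Guillera-a}. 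The decisive first step is to evaluate $\Phi(x)$ in closed form, $\Phi(x)=R(x)$, by means of the summation and transformation formulas for hypergeometric series; the output $R(x)$ is an even ratio of gamma factors with $R(0)=8/\pi^{2}$.

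Granting this evaluation, I would differentiate twice and set $x=0$. Writing the non-polynomial part as $G(k,x)$ and the polynomial part as $P(k,x)=20k^{2}+8k+1-4x^{2}$, evenness gives $\mathcal D_xG(k,0)=0=\mathcal D_xP(k,0)$, so the cross term and every square of a first logarithmic derivative disappears; in particular \emph{no} ordinary harmonic numbers $H_{2k},H_k$ and no products of them can occur, only second-order harmonic numbers. Using
\[
\mathcal D_x^{2}\log(\tfrac12+x)_k\big|_{x=0}=\psi'(\tfrac12+k)-\psi'(\tfrac12)=-4\Big(H_{2k}^{(2)}-\tfrac14H_k^{(2)}\Big),\qquad
\mathcal D_x^{2}\log(1+x)_k\big|_{x=0}=-H_k^{(2)},
\]
one finds $\mathcal D_x^{2}\log G(k,0)=-2\big(8H_{2k}^{(2)}-3H_k^{(2)}\big)$, while the $-4x^{2}$ in $P$ contributes the constant $+4$. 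Hence
\[
\mathcal D_x^{2}\Phi(0)=-2\sum_{k=0}^{\infty}\frac{\binom{2k}{k}^{5}}{(-2^{12})^{k}}\Big\{(20k^{2}+8k+1)\big[8H_{2k}^{(2)}-3H_k^{(2)}\big]+4\Big\},
\]
which is exactly $-2$ times the left side of \eqref{equation-wei-g}.

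On the right I would compute $\mathcal D_x^{2}R(0)$. Because $R$ is even, $\mathcal D_x^{2}R(0)=R(0)\,\mathcal D_x^{2}\log R(0)$, and $\mathcal D_x^{2}\log R(0)$ is a combination of the standard values $\psi'(\tfrac12)=\pi^{2}/2$ and $\psi'(1)=\pi^{2}/6$ (reached from $\psi'(\tfrac12+k),\psi'(1+k)$ through \eqref{recurrence}); it is therefore a rational multiple of $\pi^{2}$. Multiplying by $R(0)=8/\pi^{2}$ cancels the $\pi^{2}$, and I expect the result to be $\mathcal D_x^{2}R(0)=-16/3$, which upon division by $-2$ yields the asserted value $8/3$. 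The treatment of \eqref{equation-wei-h} is parallel, starting from \eqref{Guillera-b}.

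The principal obstacle is the construction in the first step, and it is genuinely harder for \eqref{equation-wei-h} than for \eqref{equation-wei-g}. For \eqref{equation-wei-g} the ratio $8:(-3)$ between the coefficients of $H_{2k}^{(2)}$ and $H_k^{(2)}$ is precisely what the symmetric insertion of $(\tfrac12\pm x)_k^{2}$ and $(1\pm x)_k$ produces; but for \eqref{equation-wei-h} the ratio $11:(-3)$ cannot be achieved by any symmetric pairing of half-integer and integer shifts with small multiplicities. I therefore expect to invoke the duplication formula $\psi'(2k+1)=\tfrac14[\psi'(k+\tfrac12)+\psi'(k+1)]$ so as to trade $(\tfrac12\pm x)_k$ factors against $(1\pm x)_{2k}$ factors, and to place the parameter — most plausibly inside a very well-poised hypergeometric series, whose single parameter enters many factors simultaneously — so that the coefficient of $\psi'(2k+1)$ relative to $\psi'(k+1)$ comes out as $11:(-3)$ while the undeformed series still reduces to \eqref{Guillera-b}. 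Proving that the deformed series admits a gamma-function evaluation $R(x)$, rather than the routine differentiation that follows, is where the real difficulty lies.
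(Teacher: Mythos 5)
Your differentiation framework is sound and is, after the change of variable $d=\tfrac12+x$, exactly the one the paper uses: the paper differentiates its deformed identity once in $d$, divides by $1-2d$, and lets $d\to\tfrac12$ (with L'H\^{o}pital on the closed-form side), which for functions even about $d=\tfrac12$ is the same operation as your $-\tfrac12\mathcal{D}_x^2\big|_{x=0}$. Your guessed deformation for \eqref{equation-wei-g} is in fact literally correct: the paper's identity \eqref{equation-b}, rewritten with $d=\tfrac12+x$, says precisely
\[
\Phi(x)=\frac{8x\cot(\pi x)}{\pi}=\frac{8}{\pi}\,\frac{\Gamma(1+x)\,\Gamma(1-x)}{\Gamma(\tfrac12+x)\,\Gamma(\tfrac12-x)},
\]
so your $R(x)$ exists, is even, and has $R''(0)=-16/3$ as you predicted; likewise your structural guess for \eqref{equation-wei-h} (cubed half-integer shifts played off against $(1\pm x)_{2k}$ factors in the denominator) is exactly what appears in the paper's identity \eqref{equation-bb}.

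The genuine gap is the step you yourself flag as ``where the real difficulty lies'': you never prove the closed-form evaluation $\Phi(x)=R(x)$ for either identity, and without it there is no proof, since everything else in your argument is downstream of it. This step is not a direct appeal to classical summation theorems --- the quadratic weight $20k^2+8k+1-4x^2$ prevents $\Phi(x)$ from matching the hypotheses of any standard summation formula as it stands. The paper manufactures the two evaluations by taking Chu and Zhang's transformation formulas (Theorems 9 and 32 of \cite{Chu-b}), setting $e=a$ so that the right-hand sides become Dougall-summable series evaluated by \eqref{eq:Dougall}, and then specializing $(a,b,c)=(\tfrac12,\tfrac12,1-d)$; the weight polynomial and its $x^2$-correction are outputs of that specialization, not inputs one is free to posit. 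A further wrinkle your sketch misses for \eqref{equation-wei-h}: there the deformed weight is not a polynomial $820k^2+180k+13+O(x^2)$ but a rational function $\Omega_k(d)$ of the parameter (with denominators $3-2d+4k$ and $1+2d+4k$), and the constant $43$ arises from the limit of $\mathcal{D}_d\,\Omega_k(d)/(1-2d)$ rather than from a simple quadratic correction term, so even the ``routine differentiation'' half of the argument is more delicate for the second identity than your outline anticipates.
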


The rest of the paper is organized as follows. According to the
operator method and several summation and transformation formulas
for hypergeometric series, we shall certify Theorems
\ref{thm-a}-\ref{thm-c} in Sections 2-4, respectively.

\section{Proof of Theorem \ref{thm-a}}
Above all, we shall give the following parametric generalizations of
\eqref{equation-wei-a}-\eqref{equation-wei-e}.

\begin{thm}\label{thm-d}
 Let $x$ be a complex number. Then
\begin{align}
&\sum_{k=0}^{\infty}\frac{\binom{2k}{k}^2}{x^k}(2H_{2k}-H_k)=\frac{1}{2}\bigg(\log\frac{x}{x-16}\bigg)\sum_{k=0}^{\infty}\frac{\binom{2k}{k}^2}{x^k},
\label{equation-wei-o}
\end{align}
where $|x|>16$,
\begin{align}
&\sum_{k=0}^{\infty}\frac{\binom{2k}{k}\binom{3k}{k}}{x^k}(3H_{3k}-H_k)=\bigg(\log\frac{x}{x-27}\bigg)\sum_{k=0}^{\infty}\frac{\binom{2k}{k}\binom{3k}{k}}{x^k},
\label{equation-wei-p}
\end{align}
where $|x|>27$,
\begin{align}
&\sum_{k=0}^{\infty}\frac{\binom{2k}{k}\binom{4k}{2k}}{x^k}(2H_{4k}-H_{2k})=\frac{1}{2}\bigg(\log\frac{x}{x-64}\bigg)\sum_{k=0}^{\infty}\frac{\binom{2k}{k}\binom{4k}{2k}}{x^k},
\label{equation-wei-q}
\end{align}
where $|x|>64$,
\begin{align}
&\sum_{k=0}^{\infty}\frac{\binom{3k}{k}\binom{6k}{3k}}{x^k}(6H_{6k}-3H_{3k}-2H_{2k}+H_k)=\bigg(\log\frac{x}{x-432}\bigg)\sum_{k=0}^{\infty}\frac{\binom{3k}{k}\binom{6k}{3k}}{x^k},
\label{equation-wei-r}
\end{align}
where $|x|>432$.
\end{thm}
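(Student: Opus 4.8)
The plan is to recognize that, after the Gauss multiplication formula is applied, each of the four base series is a Gauss hypergeometric function of the symmetric ``reflective'' shape ${}_2F_1(a_0,1-a_0;1;N/x)$. Writing $\binom{2k}{k}=4^k(1/2)_k/(1)_k$ and the analogous triplication and sextuplication reductions for $\binom{3k}{k}$, $\binom{4k}{2k}$, $\binom{6k}{3k}$, one finds
\begin{align*}
\frac{\binom{2k}{k}^2}{x^k}=\frac{(1/2)_k^2}{(1)_k^2}\Big(\frac{16}{x}\Big)^k, \qquad \frac{\binom{2k}{k}\binom{3k}{k}}{x^k}=\frac{(1/3)_k(2/3)_k}{(1)_k^2}\Big(\frac{27}{x}\Big)^k,
\end{align*}
\begin{align*}
\frac{\binom{2k}{k}\binom{4k}{2k}}{x^k}=\frac{(1/4)_k(3/4)_k}{(1)_k^2}\Big(\frac{64}{x}\Big)^k, \qquad \frac{\binom{3k}{k}\binom{6k}{3k}}{x^k}=\frac{(1/6)_k(5/6)_k}{(1)_k^2}\Big(\frac{432}{x}\Big)^k,
\end{align*}
so the four base sums are ${}_2F_1(a_0,1-a_0;1;N/x)$ with $(a_0,N)=(1/2,16),(1/3,27),(1/4,64),(1/6,432)$, convergent exactly when $|x|>N$.

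Next, for a complex parameter $\epsilon$ I set $z=N/x$ and introduce the symmetric deformation
\begin{align*}
F(\epsilon)=\sum_{k=0}^{\infty}\frac{(a_0+\epsilon)_k(1-a_0+\epsilon)_k}{(1)_k^2}\,z^k={}_2F_1\big(a_0+\epsilon,\,1-a_0+\epsilon;\,1;\,z\big).
\end{align*}
Since $\mathcal{D}_x(1+x)_r=(1+x)_rH_r(x)$, differentiating the general term in $\epsilon$ and evaluating at $\epsilon=0$ produces the weight
\begin{align*}
\frac{d}{d\epsilon}\Big|_{0}\log\big[(a_0+\epsilon)_k(1-a_0+\epsilon)_k\big]=\sum_{j=0}^{k-1}\Big(\frac{1}{a_0+j}+\frac{1}{1-a_0+j}\Big).
\end{align*}
A short reduction of these unit fractions to ordinary harmonic numbers (grouping residues modulo $2,3,4,6$, i.e.\ an inclusion--exclusion over the relevant divisors) gives exactly $2(2H_{2k}-H_k)$, $3H_{3k}-H_k$, $2(2H_{4k}-H_{2k})$, and $6H_{6k}-3H_{3k}-2H_{2k}+H_k$ in the four cases. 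Hence $F'(0)$ is, up to the explicit constant $2,1,2,1$, precisely the left-hand side of each asserted identity.

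It remains to evaluate the logarithmic derivative $F'(0)/F(0)$, and here Euler's transformation
\begin{align*}
{}_2F_1(a,b;c;z)=(1-z)^{\,c-a-b}\,{}_2F_1(c-a,c-b;c;z)
\end{align*}
with $c=1$ is decisive: taking $a=a_0+\epsilon$, $b=1-a_0+\epsilon$ gives $c-a-b=-2\epsilon$ while $\{c-a,\,c-b\}=\{a_0-\epsilon,\,1-a_0-\epsilon\}$, so the transformed series is exactly $F(-\epsilon)$ and
\begin{align*}
F(\epsilon)=(1-z)^{-2\epsilon}F(-\epsilon).
\end{align*}
Taking logarithms and differentiating at $\epsilon=0$, the symmetric terms cancel and one obtains $2F'(0)/F(0)=-2\log(1-z)$, that is
\begin{align*}
\frac{F'(0)}{F(0)}=-\log\Big(1-\frac{N}{x}\Big)=\log\frac{x}{x-N}.
\end{align*}
Dividing by the constants $2,1,2,1$ recorded above reproduces the coefficients $\tfrac12,1,\tfrac12,1$ appearing in \eqref{equation-wei-o}--\eqref{equation-wei-r}.

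The one point demanding care is the legitimacy of differentiating $F(\epsilon)$ term by term: for $|x|>N$ we have $|z|<1$, so the series defining $F(\epsilon)$ converges locally uniformly in a neighbourhood of $\epsilon=0$ and represents an analytic function there, which justifies both the interchange of $\frac{d}{d\epsilon}$ with $\sum_k$ and the use of Euler's transformation as an identity of analytic functions in $\epsilon$. The remaining work --- the multiplication-formula reductions of the binomial products and the passage from $\sum_{j}\big((a_0+j)^{-1}+(1-a_0+j)^{-1}\big)$ to the four harmonic combinations --- is elementary bookkeeping, and is the only place where the specific constants $16,27,64,432$ and the precise shapes of the harmonic sums enter. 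This is the step I expect to be the most delicate to write out cleanly, though not the deepest.
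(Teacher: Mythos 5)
Your proof is correct and takes essentially the same route as the paper: both arguments rest on Euler's transformation ${}_2F_1(a,b;c;z)=(1-z)^{c-a-b}\,{}_2F_1(c-a,c-b;c;z)$ combined with differentiation in a parameter, which simultaneously produces the harmonic-number weights $H_k(a_0-1)+H_k(-a_0)$ on one side and the $\log(1-z)$ factor on the other. Your symmetric deformation $F(\epsilon)=(1-z)^{-2\epsilon}F(-\epsilon)$ is a clean repackaging of the paper's step of differentiating in $a$ and then setting $c=a+b$ (so that $c-a=b$ and $c-b=a$), and the multiplication-formula and residue-class reductions you defer to bookkeeping are exactly the identities the paper records explicitly.
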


\begin{proof}
Following Bailey \cite{Bailey}, define the hypergeometric by
$$
_{r+1}F_{r}\left[\begin{array}{c}
a_1,a_2,\ldots,a_{r+1}\\
b_1,b_2,\ldots,b_{r}
\end{array};\, z
\right] =\sum_{k=0}^{\infty}\frac{(a_1)_k(a_2)_k\cdots(a_{r+1})_k}
{(1)_k(b_1)_k\cdots(b_{r})_k}z^k.
$$
Then Euler's transformation formula connecting two $_2F_1$ series
(cf. \cite[P. 2]{Bailey}) may be stated as
\begin{align}
{_{2}F_{1}}\left[\begin{array}{cccccccc}
  a,b\\ c
\end{array};x\right]
=(1-x)^{c-a-b}{_{2}F_{1}}\left[\begin{array}{cccccccc}
  c-a,c-b\\ c
\end{array};x\right],
 \label{eq:Euler}
\end{align}
where $|x|<1$. It is routine to understand that the two series in
\eqref{eq:Euler}
 are both uniformly convergent for $a\in \mathbb{C}$. Apply the operator
$\mathcal{D}_{a}$ on both sides of \eqref{eq:Euler} to discover
\begin{align*}
&\sum_{k=0}^{\infty}\frac{(a)_k(b)_k}{(1)_{k}(c)_{k}}x^kH_k(a-1)
\notag\\[1mm]
&\:\:=-(1-x)^{c-a-b}\{\log(1-x)\}\sum_{k=0}^{\infty}\frac{(c-a)_k(c-b)_k}{(1)_{k}(c)_{k}}x^k
\notag\\[1mm]
&\:\:\quad-(1-x)^{c-a-b}\sum_{k=0}^{\infty}\frac{(c-a)_k(c-b)_k}{(1)_{k}(c)_{k}}x^kH_k(c-a-1).
\end{align*}
The $c=a+b$ case of it becomes
\begin{align}
&\sum_{k=0}^{\infty}\frac{(a)_k(b)_k}{(1)_{k}(a+b)_{k}}x^kH_k(a-1)
\notag\\[1mm]
&\:\:=-\{\log(1-x)\}\sum_{k=0}^{\infty}\frac{(a)_k(b)_k}{(1)_{k}(a+b)_{k}}x^k
-\sum_{k=0}^{\infty}\frac{(a)_k(b)_k}{(1)_{k}(a+b)_{k}}x^kH_k(b-1).
\label{equation-wei-s}
\end{align}
Replacing $x$ by $1/x$, equation \eqref{equation-wei-s} can be
manipulated as
\begin{align}
\sum_{k=0}^{\infty}\frac{(a)_k(b)_k}{(1)_{k}(a+b)_{k}}\frac{H_k(a-1)+H_k(b-1)}{x^k}
=\bigg(\log\frac{x}{x-1}\bigg)\sum_{k=0}^{\infty}\frac{(a)_k(1-a)_k}{(1)_{k}(a+b)_{k}}\frac{1}{x^k}.
\label{equation-wei-t}
\end{align}

Choosing $(a, b, x)\mapsto(\frac{1}{2}, \frac{1}{2}, \frac{x}{16})$
in \eqref{equation-wei-t} and using the following two relations:
\begin{align*}
&\qquad\quad\frac{(\frac{1}{2})_k^2}{(1)_{k}^2}=\frac{\binom{2k}{k}^2}{16^k},
\\[1mm]
&H_k\bigg(-\frac{1}{2}\bigg)=2H_{2k}-H_k,
\end{align*}
we obtain \eqref{equation-wei-o}. Fixing $(a, b,
x)\mapsto(\frac{1}{3}, \frac{2}{3}, \frac{x}{27})$ in
\eqref{equation-wei-t} and utilizing the following two relations:
\begin{align*}
&\qquad\qquad\frac{(\frac{1}{3})_k(\frac{2}{3})_k}{(1)_{k}^2}=\frac{\binom{2k}{k}\binom{3k}{k}}{27^k},
\\[1mm]
&H_k\bigg(-\frac{1}{3}\bigg)+H_k\bigg(-\frac{2}{3}\bigg)=3H_{3k}-H_k,
\end{align*}
we deduce \eqref{equation-wei-p}. Setting $(a, b,
x)\mapsto(\frac{1}{4}, \frac{3}{4}, \frac{x}{64})$ in
\eqref{equation-wei-t} and using the following two relations:
\begin{align*}
&\qquad\qquad\frac{(\frac{1}{4})_k(\frac{3}{4})_k}{(1)_{k}^2}=\frac{\binom{2k}{k}\binom{4k}{2k}}{64^k},
\\[1mm]
&H_k(-\tfrac{1}{4})+H_k(-\tfrac{3}{4})=4H_{4k}-2H_{2k},
\end{align*}
we arrive at \eqref{equation-wei-q}. Taking $(a, b,
x)\mapsto(\frac{1}{6}, \frac{5}{6}, \frac{x}{432})$ in
\eqref{equation-wei-t} and utilizing the following two relations:
\begin{align*}
&\qquad\qquad\qquad\frac{(\frac{1}{6})_k(\frac{5}{6})_k}{(1)_{k}^2}=\frac{\binom{3k}{k}\binom{6k}{3k}}{432^k},
\\[1mm]
&H_k(-\tfrac{1}{6})+H_k(-\tfrac{5}{6})=6H_{6k}-3H_{3k}-2H_{2k}+H_k,
\end{align*}
we are led to \eqref{equation-wei-r}.
\end{proof}

Now we are ready to prove Theorem \ref{thm-a}.

\begin{proof}[Proof of Theorem \ref{thm-a}]
The $x=-216$ case of \eqref{equation-wei-p} is
\eqref{equation-wei-a}. Selecting $x=-192,\,-4032,\,72$ and $576$ in
\eqref{equation-wei-q}, we catch hold of \eqref{equation-wei-b},
\eqref{equation-wei-c}, \eqref{equation-wei-d} and
\eqref{equation-wei-e}, respectively.
\end{proof}
\section{Proof of Theorem \ref{thm-b}}
For the goal of proving Theorem \ref{thm-b}, we require Bailey's
$_2F_1$ summation formula (cf. \cite[P. 17]{Bailey}):
\begin{align}
&{_{2}F_{1}}\left[\begin{array}{cccccccc}
  a,1-a\\
 b
\end{array};\frac{1}{2}\right]
=\frac{\Gamma(\frac{b}{2})\Gamma(\frac{1+b}{2})}{\Gamma(\frac{a+b}{2})\Gamma(\frac{1-a+b}{2})}.
 \label{eq:Bailey}
\end{align}

Now we begin to prove Theorem \ref{thm-b}.

\begin{proof}[{\bf{Proof of Theorem \ref{thm-b}}}]
We know that the series in \eqref{eq:Bailey} is uniformly convergent
for $a\in \mathbb{C}$. Employ $\mathcal{D}_{a}$ on both sides of
\eqref{eq:Bailey} to get
\begin{align}
&\sum_{k=0}^{\infty}\bigg(\frac{1}{2}\bigg)^{k-1}\frac{(a)_k(1-a)_k}{(1)_{k}(b)_{k}}\{H_k(a-1)-H_k(-a)\}
=\frac{\Gamma(\frac{b}{2})\Gamma(\frac{1+b}{2})}{\Gamma(\frac{a+b}{2})\Gamma(\frac{1-a+b}{2})}
\notag\\[1mm]
&\:\:\times
\bigg\{\psi\bigg(\frac{1-a+b}{2}\bigg)-\psi\bigg(\frac{a+b}{2}\bigg)\bigg\}.
 \label{eq:Bailey-a}
\end{align}
Via the relation:
\begin{align*}
H_k(a-1)-H_k(-a)=\sum_{i=1}^k\frac{1}{a-1+i}-\sum_{i=1}^k\frac{1}{-a+i}=\sum_{i=1}^k\frac{1-2a}{(a-1+i)(-a+i)},
\end{align*}
equation  \eqref{eq:Bailey-a} can be reformulated as
\begin{align}
&\sum_{k=0}^{\infty}\bigg(\frac{1}{2}\bigg)^{k-1}\frac{(a)_k(1-a)_k}{(1)_{k}(b)_{k}}\sum_{i=1}^k\frac{1}{(a-1+i)(-a+i)}
=\frac{\Gamma(\frac{b}{2})\Gamma(\frac{1+b}{2})}{\Gamma(\frac{a+b}{2})\Gamma(\frac{1-a+b}{2})}
\notag\\[1mm]
&\:\:\times \frac{\psi(\frac{1-a+b}{2})-\psi(\frac{a+b}{2})}{1-2a}.
 \label{eq:Bailey-b}
\end{align}
 By the L'H\^{o}pital rule, we have
\begin{align*}
\lim_{a\to\frac{1}{2}}\frac{\psi(\frac{1-a+b}{2})-\psi(\frac{a+b}{2})}{1-2a}
=\frac{\psi\,'(\frac{1+2b}{4})}{2}.
\end{align*}
Letting $a\to\frac{1}{2}$ in \eqref{eq:Bailey-b} and using the upper
limit, there is
\begin{align}
&\sum_{k=0}^{\infty}\bigg(\frac{1}{2}\bigg)^{k}\frac{(\frac{1}{2})_k^2}{(1)_{k}(b)_{k}}\bigg\{H_{2k}^{(2)}-\frac{1}{4}H_{k}^{(2)}\bigg\}
=\frac{\Gamma(\frac{b}{2})\Gamma(\frac{1+b}{2})}{16\Gamma(\frac{1+2b}{4})^2}\psi\,'\bigg(\frac{1+2b}{4}\bigg).
 \label{eq:Bailey-c}
\end{align}
Choosing $b=1$ in  \eqref{eq:Bailey-c} and using \eqref{digamma-b},
we find \eqref{equation-wei-f}.
\end{proof}

In this section, we shall also establish the following theorem
similar to Theorem \ref{thm-b}.

\begin{thm}\label{thm-d}
\begin{align}
\sum_{k=0}^{\infty}\frac{\binom{2k}{k}^2}{32^k(1+k)}\bigg\{H_{2k}^{(2)}-\frac{1}{4}H_{k}^{(2)}\bigg\}=\Gamma\bigg(\frac{3}{4}\bigg)^2\frac{\pi^2+8G-16}{4\pi\sqrt{\pi}}.
 \label{wei-g}
\end{align}
\end{thm}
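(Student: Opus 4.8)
The plan is to reuse the master identity \eqref{eq:Bailey-c} already established in the proof of Theorem \ref{thm-b}, but to specialize the free parameter to $b=2$ instead of $b=1$. The whole point is that the extra factor $1/(1+k)$ appearing in \eqref{wei-g} is precisely what the choice $b=2$ manufactures, because $(2)_k=(1+k)(1)_k$ forces $(1)_k/(2)_k=1/(1+k)$. Thus no new analytic input is needed: the differentiation of Bailey's sum, the L'H\^{o}pital limit, and the uniform convergence were all discharged in producing \eqref{eq:Bailey-c}.

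First I would put $b=2$ in \eqref{eq:Bailey-c}. On the left-hand side the summand collapses as
\[
\Big(\tfrac12\Big)^k\frac{(\frac12)_k^2}{(1)_k(2)_k}
=\Big(\tfrac12\Big)^k\frac{(\frac12)_k^2}{(1)_k^2(1+k)}
=\frac{1}{2^k}\cdot\frac{\binom{2k}{k}^2}{16^k(1+k)}
=\frac{\binom{2k}{k}^2}{32^k(1+k)},
\]
so the left side of \eqref{eq:Bailey-c} reproduces verbatim the left side of \eqref{wei-g}, harmonic factor $H_{2k}^{(2)}-\frac14H_k^{(2)}$ included.

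Next I would evaluate the right-hand side at $b=2$. The prefactor becomes $\Gamma(1)\Gamma(\frac32)/\big(16\,\Gamma(\frac54)^2\big)$ and the polygamma argument is $\frac{1+2b}{4}=\frac54$. The one genuinely computational step is to reduce $\psi\,'(\frac54)$ to known constants: the recurrence \eqref{recurrence} with $n=1$ gives $\psi\,'(\frac54)=\psi\,'(\frac14)-16$, whereupon the special value \eqref{digamma-b} yields $\psi\,'(\frac54)=\pi^2+8G-16$. Inserting $\Gamma(\frac32)=\frac{\sqrt\pi}{2}$ and $\Gamma(\frac54)=\frac14\Gamma(\frac14)$ then simplifies the right side to $\frac{\sqrt\pi}{2\,\Gamma(\frac14)^2}\,(\pi^2+8G-16)$.

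Finally, to cast the answer in the form stated in \eqref{wei-g}, I would invoke the reflection formula $\Gamma(\frac14)\Gamma(\frac34)=\pi\sqrt2$, equivalently $\Gamma(\frac14)^2=2\pi^2/\Gamma(\frac34)^2$, which converts the expression into $\Gamma(\frac34)^2\,\frac{\pi^2+8G-16}{4\pi\sqrt\pi}$, exactly the claimed value. There is no real obstacle beyond this bookkeeping; the only things to monitor are the gamma-function normalization $\Gamma(\frac54)=\tfrac14\Gamma(\frac14)$ and the sign in the polygamma recurrence, both of which are routine.
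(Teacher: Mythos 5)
Your proposal is correct and takes essentially the same route as the paper: specialize $b=2$ in \eqref{eq:Bailey-c}, reduce $\psi\,'(\tfrac{5}{4})$ to $\pi^2+8G-16$ via \eqref{recurrence} and \eqref{digamma-b}, and simplify the gamma-function prefactor. The only difference is that you make explicit the bookkeeping ($\Gamma(\tfrac{5}{4})=\tfrac{1}{4}\Gamma(\tfrac{1}{4})$ and the reflection formula $\Gamma(\tfrac{1}{4})\Gamma(\tfrac{3}{4})=\pi\sqrt{2}$) that the paper leaves implicit when it states that combining \eqref{eq:Bailey-d} and \eqref{eq:Bailey-e} produces \eqref{wei-g}.
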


\begin{proof}
Fixing $b=2$ in  \eqref{eq:Bailey-c}, there is
\begin{align}
&\sum_{k=0}^{\infty}\frac{\binom{2k}{k}^2}{32^k(1+k)}\bigg\{H_{2k}^{(2)}-\frac{1}{4}H_{k}^{(2)}\bigg\}
=\frac{\Gamma(\frac{3}{2})}{16\Gamma(\frac{5}{4})^2}\psi\,'\bigg(\frac{5}{4}\bigg).
 \label{eq:Bailey-d}
\end{align}
According to \eqref{recurrence} and \eqref{digamma-b}, it is easy to
show that
\begin{align}
&\psi\,'\bigg(\frac{5}{4}\bigg)=\psi\,'\bigg(\frac{1}{4}\bigg)-16=\pi^2+8G-16.
 \label{eq:Bailey-e}
\end{align}
So the combination of \eqref{eq:Bailey-d} and \eqref{eq:Bailey-e}
produces \eqref{wei-g}.
\end{proof}
\section{Proof of Theorem
\ref{thm-c}}

In order to prove Theorem \ref{thm-c}, we need Dougall's $_5F_4$
 summation formula (cf. \cite[P. 27]{Bailey}):
\begin{align}
&{_{5}F_{4}}\left[\begin{array}{cccccccc}
  a,1+\frac{a}{2},b,c,d\\
  \frac{a}{2},1+a-b,1+a-c,1+a-d
\end{array};1\right]
\notag\\[1mm]
&\:\:=
\frac{\Gamma(1+a-b)\Gamma(1+a-c)\Gamma(1+a-d)\Gamma(1+a-b-c-d)}{\Gamma(1+a)\Gamma(1+a-b-c)\Gamma(1+a-b-d)\Gamma(1+a-c-d)},
 \label{eq:Dougall}
\end{align}
where  $\mathfrak{R}(1+a-b-c-d)>0$.

Now we begin to prove Theorem \ref{thm-c}.

\begin{proof}[{\bf{Proof of Theorem \ref{thm-c}}}]
Recall the following transformation formula for hypergeometric
series (cf. \cite[Theorem 9]{Chu-b}):
\begin{align}
&\sum_{k=0}^{\infty}\frac{(c)_k(d)_k(e)_k(1+a-b-c)_k(1+a-b-d)_{k}(1+a-b-e)_{k}}{(1+a-c)_{k}(1+a-d)_{k}(1+a-e)_{k}(1+2a-b-c-d-e)_{k}}
\notag\\[1mm]
&\quad\times\frac{(-1)^k}{(1+a-b)_{2k}}\alpha_k(a,b,c,d,e)
\notag\\[1mm]
&\:=\sum_{k=0}^{\infty}(a+2k)\frac{(b)_k(c)_k(d)_k(e)_k}{(1+a-b)_{k}(1+a-c)_{k}(1+a-d)_{k}(1+a-e)_{k}},
\label{equation-a}
\end{align}
where  $\mathfrak{R}(1+2a-b-c-d-e)>0$ and
\begin{align*}
\alpha_k(a,b,c,d,e)&=\frac{(1+2a-b-c-d+2k)(a-e+k)}{1+2a-b-c-d-e+k}
\\[1mm]
&\quad+\frac{(1+a-b-c+k)(1+a-b-d+k)(e+k)}{(1+a-b+2k)(1+2a-b-c-d-e+k)}.
\end{align*}

Setting $e=a$ in \eqref{equation-a} and calculating the series on
the right-hand side by \eqref{eq:Dougall}, we discover
\begin{align*}
&\sum_{k=0}^{\infty}\frac{(a)_k(c)_k(d)_k(1-b)_{k}(1+a-b-c)_k(1+a-b-d)_{k}}{(1)_{k}(1+a-c)_{k}(1+a-d)_{k}(2+a-b-c-d)_{k}}
\notag\\[1mm]
&\quad\times\frac{(-1)^k}{(1+a-b)_{2k}}\beta_k(a,b,c,d)
\notag\\[1mm]
&\:=\frac{\Gamma(1+a-b)\Gamma(1+a-c)\Gamma(1+a-d)\Gamma(2+a-b-c-d)}{\Gamma(1+a)\Gamma(1+a-b-c)\Gamma(1+a-b-d)\Gamma(1+a-c-d)},
\end{align*}
where
\begin{align*}
\beta_k(a,b,c,d)&=\frac{k(1+2a-b-c-d+2k)}{a}
\\[1mm]
&\quad+\frac{(a+k)(1+a-b-c+k)(1+a-b-d+k)}{a(1+a-b+2k)}.
\end{align*}
The $(a,b,c)=(\frac{1}{2},\frac{1}{2},1-d)$ case of it can be
expressed as
\begin{align}
\sum_{k=0}^{\infty}\bigg(\frac{-1}{4}\bigg)^k\frac{(\frac{1}{2})_k(d)_k^2(1-d)_k^2}{(1)_{k}^3(\frac{1}{2}+d)_{k}(\frac{3}{2}-d)_{k}}(d-d^2+2k+5k^2)
=\frac{1-2d}{\pi}\tan(d\pi).\label{equation-b}
\end{align}
Notice that the series in \eqref{equation-b} is uniformly convergent
for $d\in \mathbb{C}$. Apply $\mathcal{D}_{d}$ on both sides of
\eqref{equation-b} to obtain
\begin{align*}
&\sum_{k=0}^{\infty}\bigg(\frac{-1}{4}\bigg)^k\frac{(\frac{1}{2})_k(d)_k^2(1-d)_k^2}{(1)_{k}^3(\frac{1}{2}+d)_{k}(\frac{3}{2}-d)_{k}}(d-d^2+2k+5k^2)
\notag\\[1mm]
&\quad\times\bigg\{2H_{k}(d-1)-2H_{k}(-d)+H_{k}\bigg(\frac{1}{2}-d\bigg)-H_{k}\bigg(d-\frac{1}{2}\bigg)\bigg\}
\notag\\[1mm]
&\:+\sum_{k=0}^{\infty}\bigg(\frac{-1}{4}\bigg)^k\frac{(\frac{1}{2})_k(d)_k^2(1-d)_k^2}{(1)_{k}^3(\frac{1}{2}+d)_{k}(\frac{3}{2}-d)_{k}}(1-2d)
\notag\\[1mm]
&\:\:=(1-2d)\sec^2(d\pi)-\frac{2}{\pi}\tan(d\pi).
\end{align*}
Dividing both sides of the last equation by $(1-2d)$, we have
\begin{align}
&\sum_{k=0}^{\infty}\bigg(\frac{-1}{4}\bigg)^k\frac{(\frac{1}{2})_k(d)_k^2(1-d)_k^2}{(1)_{k}^3(\frac{1}{2}+d)_{k}(\frac{3}{2}-d)_{k}}(d-d^2+2k+5k^2)
\notag\\[1mm]
&\quad\times\bigg\{\sum_{i=1}^k\frac{2}{(d-1+i)(-d+i)}-\sum_{i=1}^k\frac{1}{(d-\frac{1}{2}+i)(\frac{1}{2}-d+i)}\bigg\}
\notag\\[1mm]
&\:+\sum_{k=0}^{\infty}\bigg(\frac{-1}{4}\bigg)^k\frac{(\frac{1}{2})_k(d)_k^2(1-d)_k^2}{(1)_{k}^3(\frac{1}{2}+d)_{k}(\frac{3}{2}-d)_{k}}
\notag\\[1mm]
&\:\:=\sec^2(d\pi)-\frac{2}{\pi}\frac{\tan(d\pi)}{1-2d}.
\label{equation-c}
\end{align}
By the L'H\^{o}pital rule, there holds
\begin{align}
\lim_{d\to\frac{1}{2}}\bigg\{\sec^2(d\pi)-\frac{2}{\pi}\frac{\tan(d\pi)}{1-2d}\bigg\}
=\frac{2}{3}. \label{rule}
\end{align}
Letting $a\to\frac{1}{2}$ in \eqref{equation-c} and using
\eqref{rule}, we deduce \eqref{equation-wei-g}.

Recollect the following transformation formula for hypergeometric
series (cf. \cite[Theorem 32]{Chu-b}):
\begin{align}
&\sum_{k=0}^{\infty}(-1)^k\frac{(b)_k(c)_k(d)_k(e)_k(1+a-b-c)_k(1+a-b-d)_{k}(1+a-b-e)_{k}}{(1+a-b)_{2k}(1+a-c)_{2k}(1+a-d)_{2k}(1+a-e)_{2k}}
\notag\\[1mm]
&\quad\times\frac{(1+a-c-d)_k(1+a-c-e)_{k}(1+a-d-e)_{k}}{(1+2a-b-c-d-e)_{2k}}\lambda_k(a,b,c,d,e)
\notag\\[1mm]
&\:=\sum_{k=0}^{\infty}(a+2k)\frac{(b)_k(c)_k(d)_k(e)_k}{(1+a-b)_{k}(1+a-c)_{k}(1+a-d)_{k}(1+a-e)_{k}},
\label{equation-aa}
\end{align}
where $\mathfrak{R}(1+2a-b-c-d-e)>0$ and
\begin{align*}
&\lambda_k(a,b,c,d,e)\\[1mm]
&\:=\frac{(1+2a-b-c-d+3k)(a-e+2k)}{1+2a-b-c-d-e+2k}+\frac{(e+k)(1+a-b-c+k)}{(1+a-b+2k)(1+a-d+2k)}
\\[1mm]
&\quad\times\frac{(1+a-b-d+k)(1+a-c-d+k)(2+2a-b-d-e+3k)}{(1+2a-b-c-d-e+2k)(2+2a-b-c-d-e+2k)}
\\[1mm]
&\:+\frac{(c+k)(e+k)(1+a-b-c+k)(1+a-b-d+k)}{(1+a-b+2k)(1+a-c+2k)(1+a-d+2k)(1+a-e+2k)}
\\[1mm]
&\quad\times\frac{(1+a-b-e+k)(1+a-c-d+k)(1+a-d-e+k)}{(1+2a-b-c-d-e+2k)(2+2a-b-c-d-e+2k)}.
\end{align*}

Taking $e=a$ in \eqref{equation-aa} and evaluating the series on the
right-hand side by \eqref{eq:Dougall}, we get
\begin{align*}
&\sum_{k=0}^{\infty}(-1)^k\frac{(a)_k(b)_k(c)_k(d)_k(1-b)_k(1-c)_{k}(1-d)_{k}}{(1)_{2k}(1+a-b)_{2k}(1+a-c)_{2k}(1+a-d)_{2k}}
\notag\\[1mm]
&\quad\times\frac{(1+a-b-c)_k(1+a-b-d)_{k}(1+a-c-d)_{k}}{(2+a-b-c-d)_{2k}}\theta_k(a,b,c,d)
\notag\\[1mm]
&\:=\frac{\Gamma(1+a-b)\Gamma(1+a-c)\Gamma(1+a-d)\Gamma(2+a-b-c-d)}{\Gamma(1+a)\Gamma(1+a-b-c)\Gamma(1+a-b-d)\Gamma(1+a-c-d)},
\end{align*}
where
\begin{align*}
&\theta_k(a,b,c,d)\\[1mm]
&\:=\frac{2k(1+2a-b-c-d+3k)}{a}+\frac{(a+k)(1+a-b-c+k)}{a(1+a-b+2k)}
\\[1mm]
&\quad\times\frac{(1+a-b-d+k)(1+a-c-d+k)(2+a-b-d+3k)}{(1+a-d+2k)(2+a-b-c-d+2k)}
\\[1mm]
&\:+\frac{(a+k)(c+k)(1-b+k)(1-d+k)}{a(1+2k)(1+a-b+2k)(1+a-c+2k)}
\\[1mm]
&\quad\times\frac{(1+a-b-c+k)(1+a-b-d+k)(1+a-c-d+k)}{(1+a-d+2k)(2+a-b-c-d+2k)}.
\end{align*}
The $(a,b,c)=(\frac{1}{2},\frac{1}{2},1-d)$ case of it reads
\begin{align}
\sum_{k=0}^{\infty}(-1)^k\frac{(\frac{1}{2})_k^4(d)_k^3(1-d)_k^3}{(1)_{2k}^3(\frac{1}{2}+d)_{2k}(\frac{3}{2}-d)_{2k}}\Omega_k(d)
=\frac{1-2d}{\pi}\tan(d\pi),\label{equation-bb}
\end{align}
where
\begin{align*}
\Omega_k(d) &=2k(1+6k)+\frac{(d+k)(1-d+k)(2-d+3k)}{3-2d+4k}
\\[1mm]
&\quad+\frac{(d+k)(1-d+k)^3}{(1+2d+4k)(3-2d+4k)}.
\end{align*}
Realize that the series in \eqref{equation-bb} is uniformly
convergent for $d\in \mathbb{C}$. Employ $\mathcal{D}_{d}$ on both
sides of \eqref{equation-bb} to gain
\begin{align*}
&\sum_{k=0}^{\infty}(-1)^k\frac{(\frac{1}{2})_k^4(d)_k^3(1-d)_k^3}{(1)_{2k}^3(\frac{1}{2}+d)_{2k}(\frac{3}{2}-d)_{2k}}\Omega_k(d)
\notag\\[1mm]
&\quad\times\bigg\{3H_{k}(d-1)-3H_{k}(-d)+H_{2k}\bigg(\frac{1}{2}-d\bigg)-H_{2k}\bigg(d-\frac{1}{2}\bigg)\bigg\}
\notag\\[1mm]
&\:+\sum_{k=0}^{\infty}(-1)^k\frac{(\frac{1}{2})_k^4(d)_k^3(1-d)_k^3}{(1)_{2k}^3(\frac{1}{2}+d)_{2k}(\frac{3}{2}-d)_{2k}}
\mathcal{D}_{d}\,\Omega_k(d)
\notag\\[1mm]
&\:\:=(1-2d)\sec^2(d\pi)-\frac{2}{\pi}\tan(d\pi).
\end{align*}
Dividing both sides of the last equation by $(1-2d)$, we have
\begin{align}
&\sum_{k=0}^{\infty}(-1)^k\frac{(\frac{1}{2})_k^4(d)_k^3(1-d)_k^3}{(1)_{2k}^3(\frac{1}{2}+d)_{2k}(\frac{3}{2}-d)_{2k}}\Omega_k(d)
\notag\\[1mm]
&\quad\times\bigg\{\sum_{i=1}^k\frac{3}{(d-1+i)(-d+i)}-\sum_{i=1}^{2k}\frac{1}{(d-\frac{1}{2}+i)(\frac{1}{2}-d+i)}\bigg\}
\notag\\[1mm]
&\:+\sum_{k=0}^{\infty}(-1)^k\frac{(\frac{1}{2})_k^4(d)_k^3(1-d)_k^3}{(1)_{2k}^3(\frac{1}{2}+d)_{2k}(\frac{3}{2}-d)_{2k}}
\frac{\mathcal{D}_{d}\,\Omega_k(d)}{1-2d}
\notag\\[1mm]
&\:\:=\sec^2(d\pi)-\frac{2}{\pi}\frac{\tan(d\pi)}{1-2d}.
\label{equation-cc}
\end{align}
Letting $a\to\frac{1}{2}$ in \eqref{equation-cc} and utilizing
\eqref{rule}, we catch hold of \eqref{equation-wei-h}.
\end{proof}


\end{document}